\documentclass[12pt]{amsart}

\usepackage{graphicx}
\usepackage{amssymb}
\usepackage{times}
\usepackage[dvipdfm]{color}
\ExecuteOptions{usename}
\setlength{\topmargin}{0truemm}
\setlength{\oddsidemargin}{-0.4truemm}
\setlength{\evensidemargin}{-0.4truemm}
\setlength{\textheight}{227truemm}
\setlength{\textwidth}{160truemm}
\setlength{\footskip}{10truemm}
\newcommand{\C}{\Bbb C}

\newcommand{\Z}{\Bbb Z}
\newcommand{\tr}{\mathrm{tr}\,}
\newcommand{\D}{\Delta_{K,\rho}(t)}
\newcommand{\la}{\langle}
\newcommand{\ra}{\rangle}

\numberwithin{equation}{section}
\newtheorem{theorem}{Theorem}[section]

\newtheorem{lemma}[theorem]{Lemma}

\theoremstyle{definition}

\newtheorem{example}[theorem]{Example}
\newtheorem{remark}[theorem]{Remark}

\begin{document}

\title{On a conjecture of Dunfield, Friedl and Jackson}

\author{Takayuki Morifuji}
\thanks{2010 \textit{Mathematics Subject Classification}.\/ 57M27.}

\thanks{{\it Key words and phrases.\/}
Twist knot, parabolic representation, 
twisted Alexander polynomial.}

\address{Department of Mathematics, 
Hiyoshi Campus, Keio University, 
Yokohama 223-8521, Japan}

\email{morifuji@z8.keio.jp}

\maketitle

\begin{abstract}
In this short Note, 
we show that the twisted Alexander polynomial associated to a parabolic 
$SL(2,\C)$-representation detects genus and fibering of the twist knots. 
As a corollary, a conjecture of Dunfield, Friedl and Jackson is proved for 
the hyperbolic twist knots.
\end{abstract}

\section{Introduction}

Let $K$ be a knot in the $3$-sphere $S^3$ and 
denote its knot group by $G(K)$. That is, 
$G(K)=\pi_1E(K)$ where 
$E(K)$ is the knot exterior $S^3\backslash\text{int}(N(K))$ 
which is a compact $3$-manifold with torus boundary. 
For a nonabelian representation $\rho:G(K)\to SL(2,\C)$, 
the \textit{twisted Alexander polynomial}\/ $\D\in\C[t^{\pm1}]$ is defined 
up to multiplication by some $t^i$, with $i\in \Z$, 
see \cite{Lin01-1}, \cite{Wada94-1} and \cite{KM05-1} for details. 

If $K$ is a hyperbolic knot, 
namely the interior of $E(K)$ admits the complete hyperbolic metric 
with finite volume, there is a discrete faithful representation 
$\rho_0:G(K)\to SL(2,\C)$, 
which is called the \textit{holonomy representation}, 
corresponding to 
the hyperbolic structure. 

The \textit{hyperbolic torsion polynomial}\/ 
$\mathcal{T}_K(t)\in\C[t^{\pm1}]$ was defined in \cite{DFJ10-1} 
for hyperbolic knots as a suitable normalization of $\Delta_{K,\rho_0}(t)$. 
It is a symmetric polynomial in the sense that 
$\mathcal{T}_K(t^{-1})=\mathcal{T}_K(t)$, 
which seems to contain geometric information. 
In fact 
Dunfield, Friedl and Jackson conjectured in \cite{DFJ10-1} that 
$\mathcal{T}_K$ determines the genus $g(K)$ and moreover, 
the knot $K$ is fibered if and only if $\mathcal{T}_K$ is monic. 
 
They show in \cite{DFJ10-1} that the conjecture holds for all hyperbolic knots 
with at most 15 crossings. Our main theorem in this note is the following. 

\begin{theorem}\label{thm:main}
For all hyperbolic twist knots $K$ (see Figure 1) 
$\mathcal{T}_K$ determines the genus $g(K)$ and moreover, 
the knot $K$ is fibered if and only if 
$\mathcal{T}_K$ is monic. 
\end{theorem}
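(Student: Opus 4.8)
The plan is to reduce the theorem to an explicit computation on the two‑bridge presentation of a twist knot. Every twist knot $K$ bounds an obvious once‑punctured torus, so $g(K)=1$; all twist knots are two‑bridge, all but the trefoil are hyperbolic, and among the hyperbolic ones exactly the figure‑eight knot $4_1$ is fibered. Moreover the general inequality $\deg\mathcal{T}_K\le 4g(K)-2$ holds (coming from the relation between twisted Alexander polynomials and the Thurston norm), and the implication ``$K$ fibered $\Rightarrow\mathcal{T}_K$ monic'' is also known in general. Hence, for twist knots, the theorem reduces to the two assertions: (i) $\deg\mathcal{T}_K=2$ for every hyperbolic twist knot; and (ii) $\mathcal{T}_K$ is \emph{not} monic when $K$ is a hyperbolic twist knot other than $4_1$.

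First I would fix the standard two‑bridge presentation $G(K_n)=\langle a,b\mid r_n\rangle$ with $r_n=w_naw_n^{-1}b^{-1}$, where $a,b$ are meridians and $w_n$ is the two‑bridge word attached to the twist parameter $n$, normalized so that $K_1$ is the trefoil and $K_{-1}=4_1$. Up to conjugation, a parabolic $SL(2,\C)$‑representation $\rho$ sends $a$ to the unipotent upper‑triangular matrix with off‑diagonal entry $1$ and $b$ to the unipotent lower‑triangular matrix with off‑diagonal entry $-u$, where $u$ runs over the roots of the Riley polynomial $\phi_n(u)\in\Z[u]$ of $K_n$; the holonomy representation $\rho_0$ corresponds to the geometric root. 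For twist knots the polynomials $\phi_n$ are classical and satisfy a linear recursion in $n$ with Chebyshev‑type coefficients, which I would record for later use.

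Next I would compute the twisted Alexander polynomial from Wada's formula, $\Delta_{K_n,\rho}(t)=\det\Phi(\partial r_n/\partial b)\big/\det\Phi(a-1)$, where $\Phi(g)=t^{\alpha(g)}\rho(g)$ and $\det\Phi(a-1)=(t-1)^2$. Because the relators $r_n$ (equivalently the words $w_n$) vary with $n$ according to a fixed linear recursion, the numerator $\det\Phi(\partial r_n/\partial b)$ obeys a linear recursion in $n$ parallel to that of $\phi_n$, and solving it yields a closed form for $\Delta_{K_n,\rho}(t)$ as a Laurent polynomial in $t$ of breadth at most $2$ with coefficients in $\Z[u]$. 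Reducing modulo $\phi_n(u)$ and passing to the symmetric normalization then gives $\mathcal{T}_{K_n}(t)=a_n(u)\,t+b_n(u)+a_n(u)\,t^{-1}$ with $a_n(u),b_n(u)\in\Z[u]/(\phi_n)$; one also has to check that turning $\Delta_{K_n,\rho_0}(t)$, defined only up to multiplication by $t^i$, into the symmetric $\mathcal{T}_{K_n}(t)$ does not disturb monicity, which is immediate once the closed form is available.

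The last step, and the one I expect to be the main obstacle, is to control the leading coefficient $a_n(u)$ modulo the Riley polynomial. For (i), and in particular for genus detection by $\rho_0$, I need $a_n(u)\not\equiv 0\pmod{\phi_n(u)}$, i.e.\ $\gcd(a_n,\phi_n)=1$, equivalently $\mathrm{Res}(a_n,\phi_n)\ne 0$. For (ii) I need $a_n(u)\mp 1\not\equiv 0\pmod{\phi_n(u)}$ whenever $K_n$ is hyperbolic and $n\ne -1$; since $\phi_n$ is monic this says that the algebraic integer $a_n(u)$ is not a unit, i.e.\ $|\mathrm{Res}(a_n\mp 1,\phi_n)|>1$ there. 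The recursions for $a_n$ and $\phi_n$ should make these resultant computations tractable and force the conclusion, the essential point being that the ``size'' of $a_n(u)$ grows with the twist parameter in step with the leading coefficient $n$ of the ordinary Alexander polynomial $nt-(2n-1)+nt^{-1}$ of $K_n$, so that $a_n(u)$ can be a unit only at $n=\pm 1$---namely at the non‑hyperbolic trefoil $K_1$ and at $K_{-1}=4_1$. Combining (i), (ii) and the standard ``fibered $\Rightarrow$ monic'' direction then proves the theorem; in fact the argument yields the same conclusions for the twisted Alexander polynomial of every parabolic $SL(2,\C)$‑representation of $K_n$, the case of $\rho_0$ being the statement above.
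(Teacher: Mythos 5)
Your setup matches the paper's: reduce to Theorem~\ref{thm:parabolic} (the statement for arbitrary parabolic representations), use the general inequality $4g(K)-2\ge\deg\Delta_{K,\rho}(t)$ and the known ``fibered $\Rightarrow$ monic and degree-detecting'' direction, obtain a closed form $\gamma\, t^{-1}+\delta+\gamma\, t$ for the twisted Alexander polynomial with coefficients in $\Z[u]$ reduced modulo the Riley polynomial $\phi_q(u)$, and then show that the leading coefficient is neither $0$ nor $1$ at the relevant roots of $\phi_q$. But that last step is exactly where your proposal stops being a proof: you defer it to resultant computations that ``should be tractable and force the conclusion,'' supported only by the heuristic that the size of $a_n(u)$ grows with the twist parameter. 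Nothing in the proposal actually establishes $\mathrm{Res}(a_n,\phi_n)\neq 0$ or $a_n(u)\neq 1$ at the roots, and estimating such resultants along the Chebyshev-type recursion is genuinely nontrivial. (You also conflate ``$a_n(u)\neq\pm1$ at every root'' with ``$a_n(u)$ is not a unit,'' i.e.\ $|\mathrm{Res}(a_n\mp1,\phi_n)|>1$; only the former is needed, and the latter is a strictly stronger claim.)

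The paper closes this gap with a much softer argument that requires no resultants at all: writing the leading coefficient as $\gamma_q(u)=\alpha\beta$ and expressing it through the traces $\tau_j(u)=\tr\rho(w^j)$, one finds $\deg\gamma_q(u)=2|q|-2$, which is \emph{strictly less} than $\deg\phi_q(u)$ ($=2q-1$ for $q>0$, $=2|q|$ for $q<0$, by Hoste--Shanahan). Since $\phi_q(u)$ is irreducible over $\Z$ with unit leading coefficient, if $\gamma_q(u)$ (resp.\ $\gamma_q(u)-1$) vanished at a root of $\phi_q$ then $\phi_q$ would divide it, contradicting the degree comparison (for $|q|>1$ in the second case, since $\deg(\gamma_q-1)=2|q|-2>0$). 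This degree-count-plus-irreducibility observation is the actual content of the proof, and it is the idea missing from your proposal. If you want to salvage your route, you would need to either carry out the resultant estimates explicitly or notice that the degree bound on the leading coefficient makes them unnecessary.
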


As far as we know, 
this is the first infinite family of knots for which 
the conjecture is verified. 
Since 
twist knots are 2-bridge knots (in particular alternating knots), 
their genus and fibering can 
be detected by the Alexander polynomial 
(see \cite{Crowell59-1}, \cite{Murasugi58-1}, \cite{Murasugi58-2}, \cite{Murasugi63-1}). 
However there seems to be no a priori reason 
that the same must be true for $\mathcal{T}_K$. 
See \cite[Section~7]{DFJ10-1}, \cite{KM12-1} 
for twisted Alexander polynomials 
and character varieties of knot groups. 

Recall that 
an $SL(2,\C)$-representation $\rho$ is called \textit{parabolic}\/ if 
the meridian of $G(K)$ is sent to a parabolic element of $SL(2,\C)$ and 
$\rho(G(K))$ is nonabelian. 
Since the holonomy representation of hyperbolic knots is parabolic, 
the above theorem is an immediate 
consequence of the following: 

\begin{theorem}\label{thm:parabolic}
Let $K$ be a twist knot and $\rho:G(K)\to SL(2,\C)$ 
a parabolic representation. 
Then $\D$ determines $g(K)$. 
Moreover $K$ is fibered if and only if $\D$ is monic. 
\end{theorem}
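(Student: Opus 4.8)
The plan is to compute $\D$ explicitly from Wada's formula, using that twist knots are two-bridge and that, in the natural presentation, the relator is built from a single commutator whose image under a parabolic representation is governed by Chebyshev polynomials. Two elementary facts make the statement concrete: every nontrivial twist knot bounds a once-punctured torus, so $g(K)=1$; and among twist knots exactly $3_1$ and $4_1$ are fibered, for instance because a genus-one knot is fibered iff its Alexander polynomial is monic while the Alexander polynomial of a twist knot has the form $at^{2}-bt+a$, with $|a|=1$ only in those two cases (cf.\ \cite{Crowell59-1}, \cite{Murasugi63-1}). Hence Theorem~\ref{thm:parabolic} amounts to showing, for every parabolic $\rho$: (i) $\deg\D=2$ (equivalently, as $g(K)=1$, $\deg\D=4g(K)-2$); and (ii) the top coefficient of $\D$ is $\pm1$ exactly when $K\in\{3_1,4_1\}$.

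Fix the standard two-bridge presentation $G(K)=\la a,b\mid awb^{-1}w^{-1}\ra$ with $a,b$ meridians, where $w$ has exponent sum $0$ and is built from powers of the commutator $w_0=ba^{-1}b^{-1}a$ according to the twisting number (one can take $w=w_0^{\,k}$ for the even twist knots, a close variant for the odd ones, and treat the trefoil separately). Normalize a parabolic representation as $\rho(a)=\bigl(\begin{smallmatrix}1&1\\0&1\end{smallmatrix}\bigr)$, $\rho(b)=\bigl(\begin{smallmatrix}1&0\\-u&1\end{smallmatrix}\bigr)$; the relation holds if and only if $u$ is a root of the Riley polynomial $\phi_K(u)\in\Z[u]$, and every such root conversely gives a parabolic representation. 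Such a representation is irreducible (a reducible one would carry every meridian, hence all of $G(K)$, into an abelian subgroup), so $\D$ is a genuine element of $\C[t^{\pm1}]$, computed by Wada's formula (\cite{Wada94-1}): using the Fox identity $\p(awb^{-1}w^{-1})/\p a=1+(a-awb^{-1}w^{-1})\,\p w/\p a$, the triviality of the relator in $G(K)$, the homomorphism $\Phi$ induced by $\rho$ tensored with the abelianization, and $\tr\rho(b)=2$, one obtains (with $\doteq$ denoting equality up to a unit $\pm t^{i}$)
\[
\D\ \doteq\ \frac{\det\!\bigl(I+(t\rho(a)-I)\,\Phi(\p w/\p a)\bigr)}{\det(t\rho(b)-I)}\ =\ \frac{\det\!\bigl(I+(t\rho(a)-I)\,\Phi(\p w/\p a)\bigr)}{(t-1)^{2}}.
\]
Because $w_0$ has exponent sum $0$, $\Phi(w_0)=\rho(w_0)$, so $\Phi(\p w_0^{\,k}/\p a)=\bigl(\sum_{j=0}^{k-1}\rho(w_0)^{j}\bigr)\Phi(\p w_0/\p a)$, and the powers $\rho(w_0)^{j}$ are governed by the Cayley--Hamilton recursion, i.e.\ by Chebyshev polynomials in $\tr\rho(w_0)=u^{2}+2$.

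Carrying this out shows that $\Phi(\p w/\p a)$ consists of Laurent polynomials of $t$-breadth $1$, so the numerator has breadth at most $4$ and $\deg\D\le2$; and the top coefficient of $\D$ is, in the even case,
\[
\det\Bigl(\sum_{j=0}^{k-1}\rho(w_0)^{j}\Bigr)\ =\ \frac{2-\tr\bigl(\rho(w_0)^{k}\bigr)}{2-\tr\rho(w_0)}\ =\ \frac{p_k(u^{2}+2)-2}{u^{2}}\ =:\ \ell_K(u),
\]
a polynomial in $u$ of degree $2(k-1)$, where $p_k(\zeta+\zeta^{-1})=\zeta^{k}+\zeta^{-k}$ (the odd case is analogous, with an extra contribution from the corrective factor). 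Checking $\ell_K(u)\ne0$ at every root of $\phi_K$ — equivalently, that $\rho(w_0)$ has infinite order, which is visible from the Riley polynomial — yields $\deg\D=2$, giving (i) and the genus statement. For (ii): when $K=4_1$ one has $k=1$ and $\ell_K\equiv1$, so $\D$ is monic; when $K=3_1$ the Riley polynomial is linear and a direct computation gives $\D\doteq t^{2}+1$, again monic. So what remains is to show that for every other twist knot $\ell_K(u)\ne\pm1$ at every root of $\phi_K$, equivalently that $\phi_K(u)$ is coprime to $\ell_K(u)^{2}-1$.

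I expect this last point to be the main obstacle. The Fox-calculus reduction and the breadth bound are routine, but excluding the coincidence $\ell_K(u)=\pm1$ uniformly in the twisting number — i.e.\ the coprimality of $\phi_K$ with $\ell_K^{2}-1$, and in the fibered cases the divisibility $\phi_K\mid\ell_K-1$ — needs enough control of these two explicit families of Chebyshev-type polynomials to rule out shared roots, presumably via an induction on the twisting number carried out separately for the two parity families and tracking the corrective factor in the odd case. Once this is established, both halves of Theorem~\ref{thm:parabolic} follow.
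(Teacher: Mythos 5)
Your reduction is sound as far as it goes: you correctly identify that everything comes down to the top coefficient of $\D$, which you compute as $\ell_K(u)=\det\bigl(\sum_{j=0}^{k-1}\rho(w_0)^j\bigr)=\bigl(2-\tr\bigl(\rho(w_0)^k\bigr)\bigr)/\bigl(2-\tr\rho(w_0)\bigr)$, a polynomial in $\Z[u]$ of degree $2(k-1)$; this agrees with the paper's leading coefficient $\gamma_q(u)=\alpha\beta$, which has degree $2|q|-2$. But the step you yourself flag as ``the main obstacle'' --- showing that $\ell_K$ (resp.\ $\ell_K\mp1$) shares no root with the Riley polynomial $\phi_K$ --- is exactly where the theorem lives, and your proposed route to it (an induction on the twisting number controlling two families of Chebyshev-type polynomials to rule out common roots) is not carried out and would be genuinely hard to push through directly. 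As written, the proof is incomplete at its decisive point.

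The missing idea is cheap once you bring in one more input. By Riley and Hoste--Shanahan, $\phi_q(u)\in\Z[u]$ is \emph{irreducible}, has leading coefficient $\pm1$, and has degree $2|q|-1$ (for $q>0$) or $2|q|$ (for $q<0$). Hence if $\ell_K$, or $\ell_K\mp1$, vanished at some root of $\phi_q$, irreducibility would force $\phi_q$ to divide that polynomial in $\Q[u]$; this is impossible because $\deg\phi_q>2|q|-2\geq\deg(\ell_K\mp1)$, and in the nonfibered case $|q|>1$ the polynomial $\ell_K\mp1$ has degree $2|q|-2>0$, so it is not identically zero. This single degree comparison against the irreducible Riley polynomial replaces your entire proposed induction and is precisely how the paper closes both halves of the argument (nonvanishing of the top coefficient gives $\deg\D=2=4g(K)-2$; the same argument applied to $\gamma_q-1$ gives nonmonicity for $|q|>1$). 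I recommend you restructure the final step around the irreducibility of $\phi_q$ rather than attempting the coprimality analysis directly.
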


In the next section, 
we quickly review twisted Alexander polynomials of twist knots 
for parabolic $SL(2,\C)$-representations (see \cite[Sections~3,4]{Morifuji08-1} for details). 
The proof of Theorem~\ref{thm:parabolic} will be given in Section 3. 

\section{Twisted Alexander polynomials of twist knots}

Let $K=J(\pm2,p)$ be the twist knot ($p\in\Z$). 
It is known that 
$J(\pm2,2q+1)$ is equivalent to 
$J(\mp2,2q)$ 
and $J(\pm2,p)$ is the mirror image of 
$J(\mp2,-p)$. 
Hence 
we only consider the case where 
$K=J(2,2q)$ for $q\in\Z$ (see Figure 1). 
The knot $J(2,0)$ presents the trivial knot, 
so that we always assume $q\not=0$. 
The typical examples 
are 
the trefoil knot $J(2,2)$ and 
the figure eight knot $J(2,-2)$. 

The twist knots are alternating knots and have genus one. 
The Alexander polynomial of $K=J(2,2q)$ is given by 
$\Delta_K(t)=q-(2q-1)t+q t^2$. 
Furthermore, 
it is known (see \cite{Murasugi63-1}) that 
$J(2,2q)$ is fibered if and only if $|q|=1$. 
It is also known that $J(2,2q)$ is hyperbolic if 
$q\not\in\{0,1\}$. 

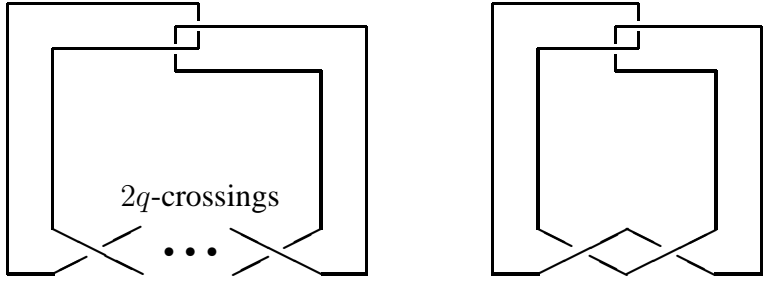
\begin{figure}
\setlength{\unitlength}{0.15mm}
\thicklines{
\begin{picture}(300,280)(120,-20)
\put(-19,0){\line(-1,0){41}}
\put(40,60){{$2q$-crossings}}
\put(78,15){{\tiny $\bullet\ \bullet\ \bullet$}}
\put(218,0){\line(1,0){41}}
\put(218,0){\line(-2,1){80}}
\put(218,40){\line(-2,-1){33}}
\put(171,15){\line(-2,-1){31}}
\put(-20,40){\line(2,-1){80}}
\put(-20,0){\line(2,1){30}}
\put(25,25){\line(2,1){33}}
\put(-60,0){\line(0,1){240}}
\put(-20,40){\line(0,1){160}}
\put(218,40){\line(0,1){140}}
\put(258,0){\line(0,1){220}}
\put(-60,240){\line(1,0){169}}
\put(89,220){\line(1,0){169}}
\put(109,240){\line(0,-1){15}}
\put(109,200){\line(0,1){15}}
\put(-20,200){\line(1,0){129}}
\put(89,180){\line(1,0){129}}
\put(89,180){\line(0,1){15}}
\put(89,220){\line(0,-1){15}}
\put(411,0){\line(-1,0){42}}
\put(489,40){\line(2,-1){30}}
\put(534,16){\line(2,-1){33}}
\put(489,0){\line(2,1){79}}
\put(567,0){\line(1,0){42}}
\put(410,40){\line(2,-1){30}}
\put(456,16){\line(2,-1){33}}
\put(410,0){\line(2,1){80}}
\put(370,0){\line(0,1){240}}
\put(410,40){\line(0,1){160}}
\put(568,40){\line(0,1){140}}
\put(608,0){\line(0,1){220}}
\put(370,240){\line(1,0){129}}
\put(479,220){\line(1,0){129}}
\put(499,240){\line(0,-1){15}}
\put(499,200){\line(0,1){15}}
\put(410,200){\line(1,0){89}}
\put(479,180){\line(1,0){89}}
\put(479,180){\line(0,1){15}}
\put(479,220){\line(0,-1){15}}
\end{picture}
}
\caption{$K=J(2,2q)$ and the figure eight knot $J(2,-2)$}
\end{figure}

The knot group $G(J(2,2q))$ has the presentation: 
$$
G(J(2,2q))
=
\la
x,y~|~w^qx=yw^q 
\ra,
$$
where 
$w=[y,x^{-1}]$. 
Suppose that $\rho:G(J(2,2q))\to SL(2,\C)$ is a parabolic representation. 
After conjugating, if necessary, we may assume that 
$$
\rho(x)
=\begin{pmatrix}
1 & 1 \\ 
0 & 1 
\end{pmatrix}\quad
\text{and}\quad
\rho(y)
=\begin{pmatrix} 
1 & 0 \\
-u & 1
\end{pmatrix}.
$$
Let $\rho(w^q)=(a_{ij}(u))$ and write $\phi_q(u)=a_{11}(u)$. 
It is known that 
$\rho$ defines a group representation when 
$u$ satisfies $\phi_q(u)=0$ (see \cite[Theorem~2]{Riley72-1}). 
We call $\phi_q(u)$ the 
\textit{Riley polynomial}\/ of the twist knot $J(2,2q)$. 
By \cite[Proposition~3.1]{Morifuji08-1}, 
$\phi_q(u)$ has an explicit formula 
$$
\phi_q(u)
=\left(1-u\right)\frac{\lambda_+^q-\lambda_-^q}{\lambda_+-\lambda_-}
-\frac{\lambda_+^{q-1}-\lambda_-^{q-1}}{\lambda_+-\lambda_-},
$$
where 
$$
\lambda_\pm(u)
=
\frac
{{u^2+ 2}
\pm
\sqrt{u^4+4u^2}}
{2}
$$
denote the eigenvalues of the matrix $\rho(w)$. 
Of course, 
the holonomy representation $\rho_0$ corresponds to one 
of the roots of $\phi_q(u)=0$. 

\begin{lemma}\label{lem:Riley}
The Riley polynomial $\phi_q(u)$ satisfies the following:
\begin{enumerate}
\item
The highest coefficient of $\phi_q(u)$ is $\pm1$.
\item
$\phi_q(u)\in \Z[u]$ is irreducible. 
\item
$\deg\phi_q(u)=2q-1~(q>0)$ or $2|q|~(q<0)$.
\end{enumerate}
\end{lemma}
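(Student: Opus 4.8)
\medskip
\noindent\emph{Proof plan.}\ The mechanism behind all three assertions is a single three--term recursion for the Riley polynomials, which I would extract from the explicit formula quoted above. Write $z=u^{2}+2$. Since $\lambda_{\pm}$ are the two roots of $X^{2}-zX+1=0$, we have $\lambda_{+}\lambda_{-}=1$ and $\lambda_{+}+\lambda_{-}=z$, so the polynomials $S_{n}=(\lambda_{+}^{n}-\lambda_{-}^{n})/(\lambda_{+}-\lambda_{-})$ satisfy $S_{0}=0$, $S_{1}=1$, the recursion $S_{n+1}=zS_{n}-S_{n-1}$, and (because $\lambda_{+}\lambda_{-}=1$) the antisymmetry $S_{-n}=-S_{n}$. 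Hence $S_{n}\in\mathbb{Z}[z]\subset\mathbb{Z}[u]$, and for $n\ge 1$ it is monic of degree $n-1$ in $z$, i.e.\ of degree $2(n-1)$ in $u$ with leading coefficient $1$. Substituting the recursion for $S_{n}$ into $\phi_{q}=(1-u)S_{q}-S_{q-1}$ and simplifying (using $zS_{q}=S_{q+1}+S_{q-1}$ and $zS_{q-1}=S_{q}+S_{q-2}$), I expect to arrive at
\[
\phi_{q+1}=(u^{2}+2)\,\phi_{q}-\phi_{q-1},\qquad \phi_{0}=1,\quad \phi_{1}=1-u .
\]
Since the coefficients $u^{2}+2$ and $-1$ of this recursion lie in $\mathbb{Z}[u]$ and $\phi_{0},\phi_{1}\in\mathbb{Z}[u]$, running it in both directions already gives $\phi_{q}\in\mathbb{Z}[u]$ for every $q\in\mathbb{Z}$ --- the integrality half of (2) --- and a one--line induction gives $\phi_{q}(0)=1$.

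Parts (1) and (3) then follow by induction on $|q|$. For $q\ge 1$ the base case $q=1$ gives $\deg\phi_{1}=1=2\cdot 1-1$ with leading coefficient $-1$, and in the step $q\mapsto q+1$ the term $(u^{2}+2)\phi_{q}$, of degree $2q+1$, strictly dominates $\phi_{q-1}$, of degree at most $2q-3$, so $\deg\phi_{q+1}=2(q+1)-1$ with leading coefficient $1\cdot(-1)=-1$. For $q\le -1$ I run the recursion in the form $\phi_{q-1}=(u^{2}+2)\phi_{q}-\phi_{q+1}$ downwards from $\phi_{0}=1$ and $\phi_{-1}=u^{2}+u+1$; the same degree bookkeeping yields $\deg\phi_{q}=2|q|$ with leading coefficient $1$. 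In all cases the leading coefficient is $\pm 1$, which is (1), and the degrees match (3).

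It remains to prove the irreducibility in (2). Because the leading coefficient is $\pm 1$, $\phi_{q}$ is primitive, so by Gauss's lemma it suffices to prove irreducibility over $\mathbb{Q}$. Linear factors are ruled out by the rational root theorem (the only candidates being $\pm 1$) together with a direct check that $\phi_{q}(\pm 1)\ne 0$: evaluating the recursion at $u=\pm 1$, where $u^{2}+2=3$, produces $\phi_{q}(1)=1,0,-1,-3,-8,\dots$ and $\phi_{q}(-1)=1,2,5,13,\dots$ for $q\ge 0$ (and analogous strictly positive sequences for $q<0$), the only zero being $\phi_{1}(1)=0$, which is harmless since $\phi_{1}=1-u$ is itself linear. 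The genuinely hard point --- and the one I expect to be the main obstacle --- is to exclude a factorization of $\phi_{q}$ into two factors each of degree at least two. Reduction modulo a single fixed prime does not suffice: modulo $2$ one has $\phi_{q+1}\equiv u^{2}\phi_{q}+\phi_{q-1}$, which is easy to iterate, but $\phi_{q}(1)\bmod 2$ is $3$--periodic in $q$, so $u+1$ divides $\phi_{q}$ modulo $2$ whenever $q\equiv 1\pmod 3$, and the factor $u^{2}+u+1$ enters for other residues; similar partial degeneracies occur modulo $3$. My plan is instead to use the closed form to identify a root of $\phi_{q}$ with a generator of the trace field of the twist knot $J(2,2q)$ and to invoke the computation of that trace field (due to Hoste and Shanahan), which exhibits $\phi_{q}$ as the minimal polynomial of this generator over $\mathbb{Q}$ --- equivalently, one must show the roots of $\phi_{q}$ form a single orbit under $\mathrm{Gal}(\overline{\mathbb{Q}}/\mathbb{Q})$, and establishing that orbit--transitivity is where the real work lies.
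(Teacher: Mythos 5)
Your proposal is correct, and it is worth noting that it does strictly more work than the paper, whose entire proof of this lemma consists of two citations: part (1) is attributed to Riley \cite{Riley72-1} and parts (2), (3) to Hoste--Shanahan \cite{HS01-1}. Your three-term recursion $\phi_{q+1}=(u^{2}+2)\phi_{q}-\phi_{q-1}$ with $\phi_{0}=1$, $\phi_{1}=1-u$ is genuine and checks out against the paper's Example~\ref{ex:Riley} (e.g.\ $\phi_{-1}=(u^{2}+2)\cdot 1-(1-u)=1+u+u^{2}$ and $\phi_{2}=(u^{2}+2)(1-u)-1=1-2u+u^{2}-u^{3}$), and it yields self-contained elementary proofs of (1), (3) and of the integrality half of (2); this recursion is exactly the content of \cite[Proposition~3.1]{Morifuji08-1} restricted to the parabolic case, so your derivation is a legitimate shortcut that the paper chose not to spell out. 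On the one point where real difficulty lives --- irreducibility over $\mathbb{Q}$ --- you correctly diagnose that no cheap argument (rational roots, reduction mod a fixed prime) suffices, and your fallback is to invoke Hoste--Shanahan's computation of the trace field, which is precisely the paper's own source; so on that point your route and the paper's coincide, and neither contains a from-scratch proof. The only caveat is that your irreducibility step is a plan rather than a proof: if you intend the lemma to rest on \cite{HS01-1} you should cite it outright rather than describe the Galois-orbit transitivity as work still to be done, since as written that sentence leaves the impression of an open gap where the paper simply has a reference.
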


\begin{proof}
(1) See \cite[Theorem~2]{Riley72-1}. 
(2), (3) See \cite[Theorem~1]{HS01-1}. 
\end{proof}

\begin{example}\label{ex:Riley}
We can easily check that 
$\phi_1(u)=1-u,~
\phi_{-1}(u)=1+u+u^2,~
\phi_2(u)=1-2u+u^2-u^3$, 
$\phi_{-2}(u)=1+2u+3u^2+u^3+u^4$ 
and 
$\phi_3(u)=1-3u+3u^2-4u^3+u^4-u^5$. 
\end{example}

\begin{lemma}\label{lem:formula}
For a parabolic representation $\rho:G(K)\to SL(2,\C)$ of $K=J(2,2q)$, 
the twisted Alexander polynomial 
$\Delta_{K,\rho}(t)$ is given by 
$$
\Delta_{K,\rho}(t)
~ =\, \alpha\beta  
+
\left\{
\alpha+\beta-2\alpha\beta
+\frac{\lambda_+-\lambda_-}{2+\lambda_++\lambda_-}(\alpha-\beta)
\right\}t
+\alpha\beta t^2,
$$
where
$\alpha=1+\lambda_++\lambda_+^2+\cdots+\lambda_+^{q-1}$ 
and 
$\beta=1+\lambda_-+\lambda_-^2+\cdots+\lambda_-^{q-1}$.
\end{lemma}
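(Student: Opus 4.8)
The plan is to read $\D$ off from Wada's determinant formula (see \cite{Wada94-1}) applied to the two-generator one-relator presentation $G(J(2,2q))=\la x,y\mid w^qx=yw^q\ra$ recalled above. Write the relator as $r=w^qxw^{-q}y^{-1}$, let $\alpha\colon G(J(2,2q))\to\Z$ be the abelianization with $\alpha(x)=\alpha(y)=1$, and let $\Phi\colon\Z[G(J(2,2q))]\to M_2(\C[t^{\pm1}])$ be the ring homomorphism with $\Phi(g)=t^{\alpha(g)}\rho(g)$. Then $\D\doteq\det\Phi\!\left(\p r/\p y\right)/\det\Phi(x-1)$, where $\doteq$ means equality up to a unit of $\C[t^{\pm1}]$. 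Since $\rho(x)$ is parabolic, $\det\Phi(x-1)=\det\bigl(t\rho(x)-I\bigr)=(t-1)^2$, so everything reduces to the numerator.

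First I would carry out the Fox calculus. From $w=yx^{-1}y^{-1}x$ one gets $\p w/\p y=1-yx^{-1}y^{-1}$, and combining this with the telescoping rule $\p(w^q)/\p y=(1+w+\cdots+w^{q-1})\,\p w/\p y$ (for $q>0$; for $q<0$ one uses the analogous expansion in negative powers of $w$) and the product rule yields
$$
\frac{\p r}{\p y}=\bigl(1-w^qxw^{-q}\bigr)\bigl(1+w+\cdots+w^{q-1}\bigr)\bigl(1-yx^{-1}y^{-1}\bigr)-w^qxw^{-q}y^{-1}.
$$
Applying $\Phi$, and using that $w$ is a commutator (so $\alpha(w)=0$ and $\Phi(w)=\rho(w)$), that $r$ is a relator (so $\Phi(w^qxw^{-q})=\Phi(y)=t\rho(y)$ and $\Phi(w^qxw^{-q}y^{-1})=I$), and $\Phi(yx^{-1}y^{-1})=t^{-1}\rho(yx^{-1}y^{-1})$, the numerator becomes
$$
\det\Bigl[\bigl(I-t\rho(y)\bigr)\,\Sigma_q\,\bigl(I-t^{-1}\rho(yx^{-1}y^{-1})\bigr)-I\Bigr],\qquad \Sigma_q:=I+\rho(w)+\cdots+\rho(w)^{q-1}.
$$

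Next comes the matrix evaluation. Multiplying out $\rho(y)\rho(x)^{-1}\rho(y)^{-1}\rho(x)$ gives
$$
\rho(w)=\begin{pmatrix}1-u&-u\\ u^2&u^2+u+1\end{pmatrix},
$$
with eigenvalues $\lambda_\pm$; for a parabolic representation $u\neq0$, hence $\rho(w)-I$ is invertible, and by diagonalizing $\rho(w)$ (or via $\rho(w)^j=\frac{\lambda_+^j-\lambda_-^j}{\lambda_+-\lambda_-}\rho(w)-\frac{\lambda_+^{j-1}-\lambda_-^{j-1}}{\lambda_+-\lambda_-}I$) the matrix $\Sigma_q$ has eigenvalues $\alpha$ and $\beta$. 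Substituting the explicit $\rho(x)$, $\rho(y)$ and the eigenbasis of $\rho(w)$, one multiplies the three $2\times2$ factors, subtracts $I$, takes the $2\times2$ determinant, and divides by $(t-1)^2$. Collecting powers of $t$ produces a palindromic quadratic $\alpha\beta+c\,t+\alpha\beta\,t^2$, and rewriting $c$ in terms of the symmetric functions $\lambda_++\lambda_-=u^2+2$, $\lambda_+\lambda_-=1$ and $(\lambda_+-\lambda_-)^2=u^4+4u^2$ turns it into $\alpha+\beta-2\alpha\beta+\frac{\lambda_+-\lambda_-}{2+\lambda_++\lambda_-}(\alpha-\beta)$, as claimed.

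The only genuine labour is this last simplification: keeping track of the eigenvector entries of $\rho(w)$ and checking that the off-diagonal interaction between $\rho(y)$ and the eigenbasis of $\rho(w)$ collapses to the single cross term $\frac{\lambda_+-\lambda_-}{2+\lambda_++\lambda_-}(\alpha-\beta)$. As built-in consistency checks I would use the a priori palindromy $\D\doteq\Delta_{K,\rho}(t^{-1})$ (which forces the $t^0$ and $t^2$ coefficients to coincide) and the small cases of Example~\ref{ex:Riley}: $q=1$ (so $u=1$) giving $1+t^2$, and $q=-1$ (the figure eight) giving $1-4t+t^2$, the latter independent of the root $u$ of $\phi_{-1}$. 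This computation is carried out in detail in \cite[Sections~3,4]{Morifuji08-1}.
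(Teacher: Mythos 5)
Your route is essentially the paper's: the proof given there is just ``set $s=1$ in \cite[Theorem~4.1]{Morifuji08-1}'', and that general formula is derived in the reference exactly as you describe, by applying Wada's determinant and Fox calculus to the presentation $\la x,y\mid w^qxw^{-q}y^{-1}\ra$. Your setup is correct ($\det\Phi(x-1)=(t-1)^2$, the Fox derivative of the relator, the matrix $\rho(w)$ with trace $u^2+2$ and determinant $1$, and the identification of the extreme coefficients with $\det\Sigma_q=\alpha\beta$), and the one step you assert rather than carry out --- the simplification of the middle coefficient --- is corroborated by your $q=\pm1$ consistency checks and is performed in full in the cited reference, so the proposal stands.
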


\begin{proof}
We only have to put $s=1$ in the formula of \cite[Theorem~4.1]{Morifuji08-1}. 
\end{proof}

\begin{example}\label{ex:TAP}
For $K=J(2,2)$, 
there is just one parabolic representation up to conjugation 
and 
we have $\D=1+t^2$. 
Similarly 
we obtain $\D=1-4t+t^2$ for any parabolic representation of $K=J(2,-2)$. 
\end{example}

In general, 
the degree of the twisted Alexander polynomial gives a 
lower bound for the knot genus $g(K)$. 
In fact, 
for every nonabelian representation $\rho:G(K)\to SL(2,\C)$, 
the following inequality holds (see \cite{FK05-1}):
\begin{equation}\label{eq:bound}
4g(K)-2\geq\deg\D. 
\end{equation}
When the equality holds in (\ref{eq:bound}), 
we say $\D$ determines the knot genus. 
For a fibered knot $K$, 
it is known that $\D$ determines $g(K)$ and is a monic polynomial 
(see \cite{Cha03-1}, \cite{FK05-1}, \cite{FV11-1}, \cite{GKM05-1}, \cite{KM05-1}). 

\section{Proof of Theorem~\ref{thm:parabolic}}

First 
we denote the highest coefficient of $\D$ in Lemma~\ref{lem:formula} 
by $\gamma_q(u)$, 
namely $\gamma_q(u)=\alpha\beta$. Moreover we put 
$\tau_q(u)=\tr\rho(w^q)=\lambda_+^q+\lambda_-^q$. 
By \cite[Corollary~4.3]{Morifuji08-1}, 
$\tau_q(u)=\tau_{-q}(u)$ is a monic polynomial in $\Z[u]$ and 
$\deg\tau_q(u)=2|q|$. 

\begin{example}\label{ex:trace}
Since $\tau_1(u)=u^2+2$, 
we obtain 
$\tau_{\pm2}(u)={\tau_1}^2-2=u^4+4u^2+2$ and 
$\tau_{\pm3}(u)={\tau_1}^3-3\tau_1=u^6+6u^4+9u^2+2$. 
\end{example}

Now an easy calculation shows that 
\begin{align*}
\gamma_q(u)
&=
(1+\lambda_++\lambda_+^2+\cdots+\lambda_+^{q-1})
(1+\lambda_-+\lambda_-^2+\cdots+\lambda_-^{q-1})\\
&=
\tau_{q-1}(u)+(\text{some polynomial in $\tau_1,\ldots,\tau_{q-2}$}).
\end{align*}
Thus 
we have $\deg\gamma_q(u)=2|q|-2$. 
By Lemma~\ref{lem:Riley} (1), (2), 
if $\gamma_q(u)=0$ for a complex number $u$ satisfying $\phi_q(u)=0$, 
then the Riley polynomial $\phi_q(u)$ divides $\gamma_q(u)$. 
But this contradicts the fact that 
\begin{align*}
\deg\phi_q(u)
&=
2|q|-\max\{\text{sign}(q),0\}\\
&>
2|q|-2=\deg\gamma_q(u).
\end{align*}
Hence 
$\gamma_q(u)$ never vanishes for the parabolic representations. 
Thus 
$\deg\D=2$ and hence it determines the genus. 

A similar argument applied to $\gamma_q(u)-1$ shows that 
$\D$ is not a monic polynomial for the nonfibered twist knot 
$K=J(2,2q)$ with $|q|>1$. 
This completes the proof of Theorem~\ref{thm:parabolic}. 

\begin{remark}\label{rmk:2-bridge}
For the 3830 nonfibered $2$-bridge knots $K(a,b)$ with $b<a\leq287$, 
Dunfield, Friedl and Jackson numerically compute 
the twisted Alexander polynomials for the parabolic representations. 
In fact, 
it is shown in \cite[Section~7.6]{DFJ10-1} that 
$\D$ is nonmonic and determines the knot genus in every case. 
\end{remark}

\begin{remark}\label{rmk:2nd}
Let $\delta_q(u)$ be the second coefficient of $\D$ 
in Lemma~\ref{lem:formula}. 
As we saw in Example~\ref{ex:TAP}, 
$\delta_{\pm1}(u)$ are integers for the fibered twist knots 
$J(2,\pm2)$. 
It is not so hard to show that 
$\delta_q(u)\in \Z[u]$ and 
$\deg\delta_q(u)=2q-4$ for $q>1$. 
Therefore 
we can conclude that $\delta_q(u)$ with $q>2$ 
is not a rational number 
for the parabolic representations. 
On the other hand, 
we have $\D=(u^2+4)-4t+(u^2+4)t^2$ for the hyperbolic twist knot $K=J(2,4)$. 
In particular, 
we see that 
the second coefficient $\delta_{2}(u)$ is an integer 
for the holonomy representation, 
although $J(2,4)$ is nonfibered 
(see \cite[Section~6.5]{DFJ10-1}).  
\end{remark}

\noindent
\textit{Acknowledgements}. 
The author would like to thank 
Stefan Friedl, Teruaki Kitano, Takahiro Kitayama and 
Yoshikazu Yamaguchi for helpful comments. 
The author also would like to thank the referee 
for useful comments, which helped to improve the Note. 
This research was partially supported by 
Grant-in-Aid for Scientific Research (No.\,23540076), 
the Ministry of Education, Culture, Sports, Science 
and Technology, Japan. 



\begin{thebibliography}{99}
\bibliographystyle{plain}

\bibitem{Cha03-1}
J. C. Cha,
\textit{Fibred knots and twisted Alexander invariants},
Trans. Amer. Math. Soc. {\bf 355} (2003), 4187--4200.

\bibitem{Crowell59-1}
R. Crowell,
\textit{Genus of alternating link types}, Ann.~of Math.
(2) {\bf 69} (1959), 258--275.

\bibitem{DFJ10-1}
N. Dunfield, S. Friedl and N. Jackson,
\textit{Twisted Alexander polynomials of hyperbolic knots},
Experiment. Math. (to appear). 

\bibitem{FK05-1}
S. Friedl and T. Kim,
\textit{The Thurston norm, fibered manifolds and twisted Alexander polynomials}, 
Topology {\bf 45} (2006), 929--953. 

\bibitem{FV11-1}
S. Friedl and S. Vidussi,
\textit{Twisted Alexander polynomials detect fibered $3$-manifolds},
Ann.~of Math. {\bf 173} (2011), 1587--1643.

\bibitem{GKM05-1}
H. Goda, T. Kitano and T. Morifuji,
\textit{Reidemeister torsion, twisted Alexander polynomial and fibered knots},
Comment. Math. Helv. {\bf 80} (2005), 51--61.

\bibitem{HS01-1}
J. Hoste and P. D. Shanahan, 
\textit{Trace fields of twist knots}, 
J. Knot Theory Ramifications {\bf 10} (2001), 625--639. 

\bibitem{KM12-1}
T. Kim and T. Morifuji, 
\textit{Twisted Alexander polynomials and character varieties of $2$-bridge knot groups}, 
Internat. J. Math. {\bf 23} (2012), 1250022 (24 pages). 

\bibitem{KM05-1}
T. Kitano and T. Morifuji, 
\textit{Divisibility of twisted Alexander polynomials 
and fibered knots}, Ann. Sc. Norm. Super. Pisa Cl. Sci. (5) 
{\bf 4} (2005), 179--186.

\bibitem{Lin01-1}
X. S. Lin,
\textit{Representations of knot groups and twisted Alexander polynomials}, 
Acta Math. Sin. (Engl. Ser.) {\bf 17} (2001), 361--380.

\bibitem{Morifuji08-1}
T. Morifuji,
\textit{Twisted Alexander polynomials of twist knots for nonabelian
representations},
Bull. Sci. Math. {\bf 132} (2008), 439--453.

\bibitem{Murasugi58-1}
K. Murasugi,
\textit{On the genus of the alternating knot I},
J. Math. Soc. Japan {\bf 10} (1958), 94--105.

\bibitem{Murasugi58-2}
K. Murasugi,
\textit{On the genus of the alternating knot II},
J. Math. Soc. Japan {\bf 10} (1958), 235--248.

\bibitem{Murasugi63-1}
K. Murasugi,
\textit{On a certain subgroup of the group of an alternating link},
Amer. J. Math.
{\bf 85} (1963), 544--550.

\bibitem{Riley72-1}
R. Riley, 
\textit{Parabolic representations of knot groups, I}, 
Proc. London Math. Soc. (3) {\bf 24} (1972), 217--242. 

\bibitem{Wada94-1} 
M. Wada, 
\textit{Twisted Alexander polynomial for finitely 
presentable groups},
Topology {\bf 33} (1994), 241--256.


\end{thebibliography}
\end{document}